\newtheorem{assumption}{Assumption}
\newtheorem{remark}{Remark}
\newtheorem{theorem}{Theorem}
\newtheorem{lemma}{Lemma}
\let\b=\boldsymbol
\begin{document}

\begin{frontmatter}



\title{Estimations of the discrete Green's function of the  SDFEM  on Shishkin triangular meshes for  singularly perturbed problems with characteristic layers}

\author[label1] {Xiaowei Liu \fnref{cor1}}
\author[label2] {Jin Zhang\corref{cor2}}
\fntext[cor1] {Email: xwliuvivi@hotmail.com }
\cortext[cor2] {Corresponding author: jinzhangalex@hotmail.com }
\address[label1]{College of Science, Qilu University of Technology, Jinan 250353, China}
\address[label2]{School of Mathematics and Statistics, Shandong Normal University,
Jinan 250014, China}

\begin{abstract}
In this technical report, we present estimations of the discrete Green's function of the  streamline diffusion finite element method (SDFEM)  on Shishkin triangular meshes for   singularly perturbed problems with characteristic layers.
\end{abstract}

\end{frontmatter}

\section{Continuous problem, Shishkin mesh, SDFEM}
We consider the singularly perturbed boundary value problem
 \begin{equation}\label{eq:model problem}
 \begin{array}{rcl}
-\varepsilon\Delta u+b  u_x+cu=f & \mbox{in}& \Omega=(0,1)^{2},\\
 u=0 & \mbox{on}& \partial\Omega ,
 \end{array}
 \end{equation}
where $b,c>0$ are constants, $b\ge\beta$ on $\overline{\Omega}$ with a positive constant $\beta$ and $\varepsilon\ll b$ is a small positive parameter. It is assumed that $f$ is sufficiently smooth. The solution of \eqref{eq:model problem} typically has
an exponential layer of width
$O(\varepsilon\ln(1/\varepsilon) )$ near the outflow boundary at $x=1$ and two characteristic (or parabolic) layers of width $O(\sqrt{\varepsilon}\ln(1/\varepsilon))$ near the characteristic boundaries at $y=0$
and $y=1$.

Throughout the article, the standard notation for the Sobolev spaces
and norms will be used; and generic constants $C$, $C_i$ are independent of
$\varepsilon$ and $N$. The constants $C$ are generic while subscripted constants $C_i$ are fixed.

The Shishkin mesh used for discretizing \eqref{eq:model problem} is a piecewise uniform mesh. The reader is referred to \cite{Roos:1998-Layer,Roo1Sty2Tob3:2008-Robust,Linb:2003-Layer} for a detailed discussion of their properties and applications. Mesh changes from coarse to fine are denoted by two mesh  transition parameters $\lambda_x$ and $\lambda_y$. They are defined by   
\begin{equation*}
\lambda_{x}:=\min\left\{ \frac{1}{2},\rho\frac{\varepsilon}{\beta}\ln N \right\} \quad \mbox{and} \quad
\lambda_{y}:=
\min\left\{
\frac{1}{3},\rho\sqrt{\varepsilon}\ln N 
\right\}.
\end{equation*}
where $N=6k$ with $k\in\mathbb{Z}^+$ is the number of mesh intervals in  each direction and $\rho=2.5$ in our analysis for technical reasons   as  in \cite{Zhang:2003-Finite} and \cite{Styn1Tobi2:2003-SDFEM}. Then, the domain $\Omega$ is dissected into four subdomains as $\bar{\Omega}=\Omega_{s}\cup\Omega_{x}\cup\Omega_{y}\cup\Omega_{xy}$(see Fig. \ref{fig:Shishkin mesh}), where
\begin{align*}
&\Omega_{s}:=\left[0,1-\lambda_{x}\right]\times\left[\lambda_{y},1-\lambda_{y}\right],&&
\Omega_{y}:=\left[0,1-\lambda_{x}\right]\times\left(\left[0,\lambda_{y}\right]
\cup\left[1-\lambda_{y},1\right]
\right),\\
&\Omega_{x}:=\left[ 1-\lambda_{x},1 \right]\times\left[\lambda_{y},1-\lambda_{y}\right],&&
\Omega_{xy}:=\left[ 1-\lambda_{x},1 \right]\times\left(\left[0,\lambda_{y}\right]
\cup\left[1-\lambda_{y},1\right]
\right).
\end{align*}

\begin{assumption}\label{assumption: varepsilon-N}
Assume that $\varepsilon\le N^{-1}$, as is generally the case in practice. Furthermore we assume that
$\lambda_{x}=\rho\varepsilon\beta^{-1}\ln N$ and $\lambda_{y}=\rho\sqrt{\varepsilon}\ln N$ as otherwise $N^{-1}$ is exponentially small compared with $\varepsilon$.
\end{assumption}

\begin{figure}
\centering
\includegraphics[width=0.45\columnwidth]{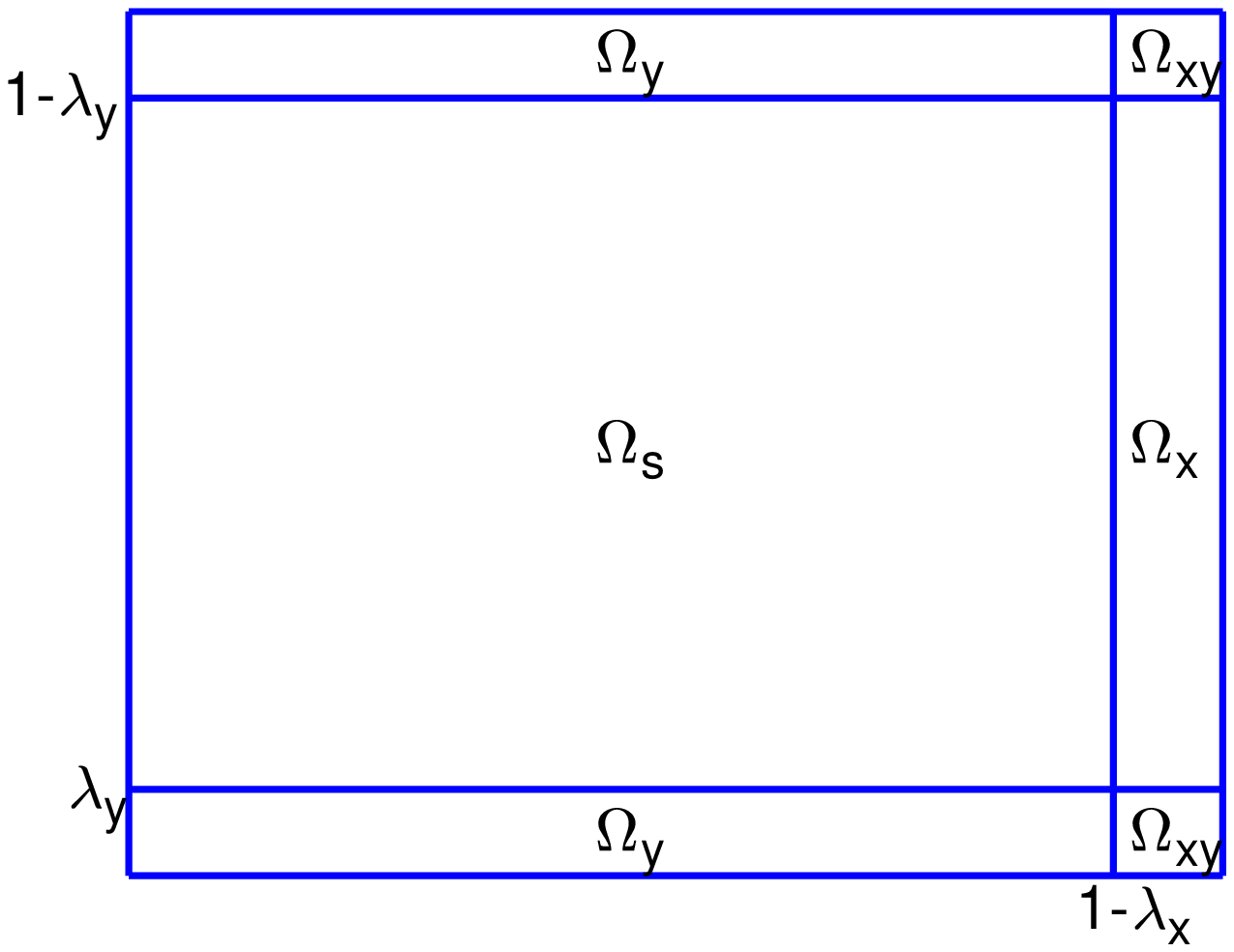}
\includegraphics[width=0.45\columnwidth]{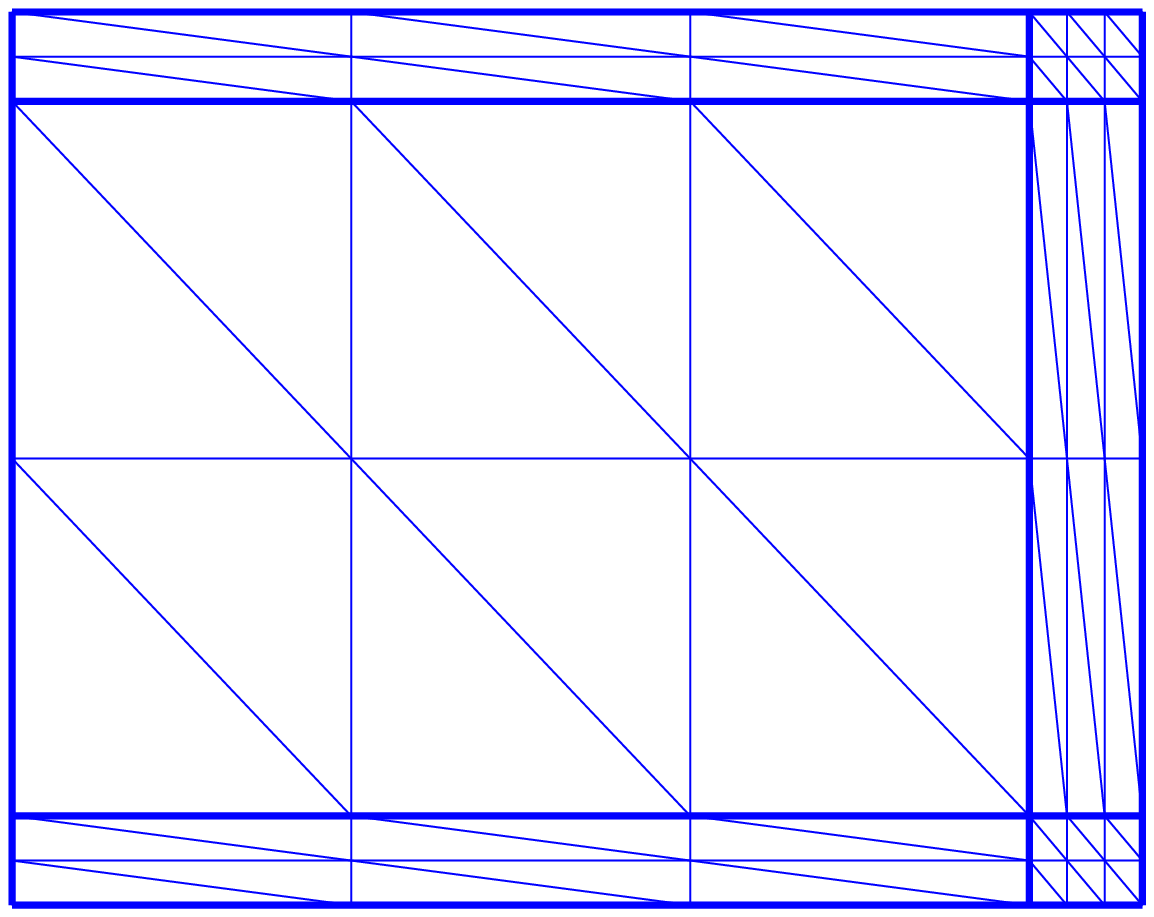}
\caption{\footnotesize{Dissection of $\Omega$ and triangulation $\mathcal{T}_{N}$}}\label{fig:1}
\label{fig:Shishkin mesh}
\end{figure}

\begin{figure}
\centering
\includegraphics[width=0.45\columnwidth]{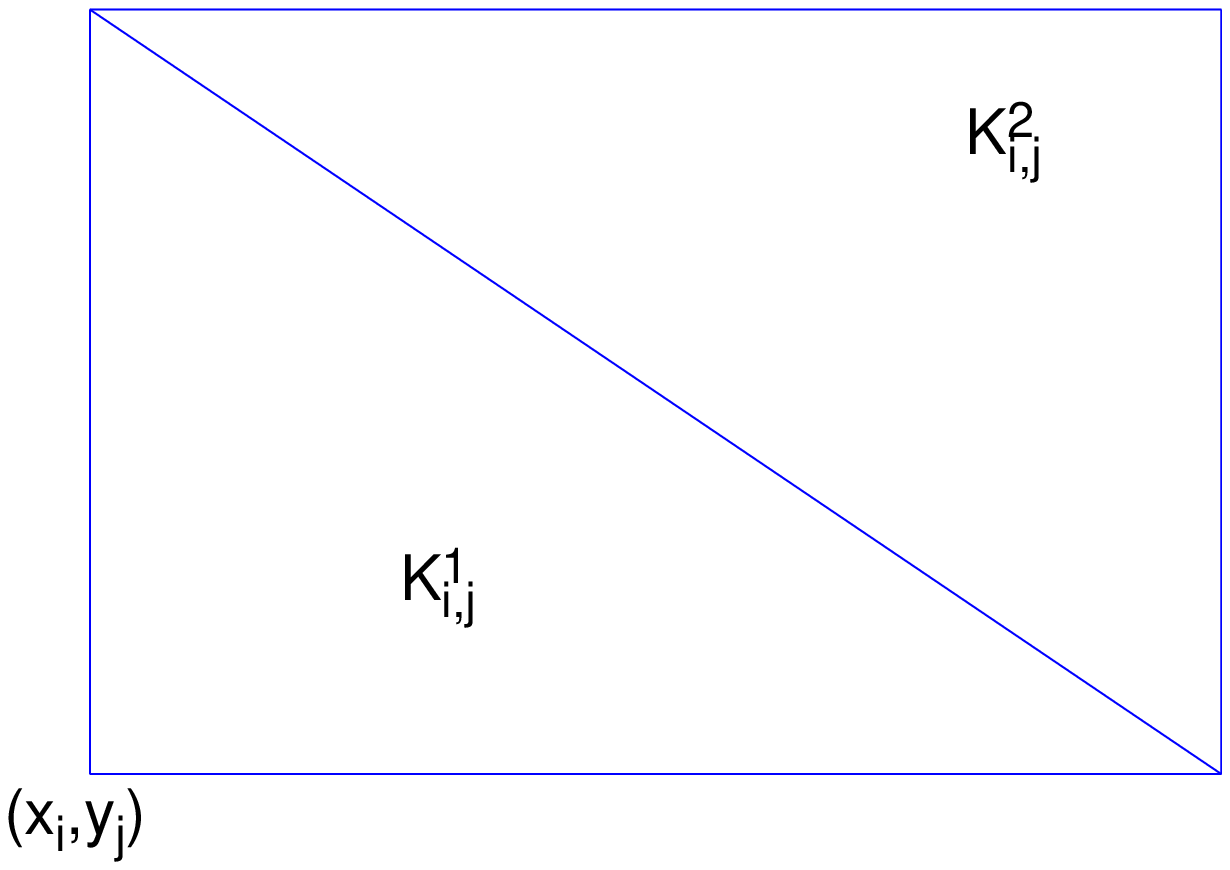}
\caption{$K^{1}_{i,j}$ and $K^{2}_{i,j}$}
\label{fig:code of mesh}
\end{figure}
\par
We introduce the set of mesh points $\left\{ (x_{i},y_{j})\in\Omega:i,\,j=0,\,\cdots,\,N  \right\}$ which divide the subdomains into uniform rectangles and triangles by drawing the diagonal in each rectangle. This triangulation is denoted by $\mathcal{T}_{N}$(see Fig.\ref{fig:Shishkin mesh}). 

The mesh sizes are denoted by $h_{x,i}:=x_{i+1}-x_{i}$ and $h_{y,i}:=y_{i+1}-y_{i}$  which
satisfy
\begin{align*}
N^{-1}\le &h_{x,i}=:H_x,h_{y,j}=:H_y \le 3N^{-1}  \quad 0\le i<N/2,\; N/3\le j<2N/3,\\
C_{1}\varepsilon N^{-1}\ln N &\le h_{x,i}=:h_x\le C_{2}\varepsilon N^{-1}\ln N \quad N/2 \le i<N,\\
C_{1}\sqrt{\varepsilon} N^{-1}\ln N &\le h_{y,j}=:h_y\le C_{2}\sqrt{\varepsilon} N^{-1}\ln N \quad
0\le j<N/3;\; 2N/3\le j<N.
\end{align*}

For notation convenience, we shall use  $K^1_{i,j}$ to denote the triangle with vertices $(x_i,y_j)$, $(x_{i+1},y_j)$ and $(x_i,y_{j+1})$,
$K^2_{i,j}$  for the triangle with vertices $(x_i,y_{j+1})$, $(x_{i+1},y_j)$ and $(x_{i+1},y_{j+1})$ (see  Fig.  \ref{fig:code of mesh}),
and $K$ for a generic element.

Let $V^N\subset V$ be the $C^0$ linear finite element space  on the Shishkin mesh $\mathcal{T}_N$.
The SDFEM consists in adding  weighted residuals to the standard Galerkin method,  which reads
\begin{equation}\label{eq:SDFEM}
\left\{
\begin{array}{lr}
\text{Find $u^{N}\in V^{N}$ such that for all $v^{N}\in V^{N}$},\\
 a_{SD}(u^{N},v^{N})=(f,v^{N})+\underset{K\subset\Omega}\sum(f,\delta_{K}b v^{N}_{x})_{K},
\end{array}
\right.
\end{equation}
where
\begin{equation*}
a_{SD}(u^{N},v^{N})=a_{Gal}(u^{N},v^{N})+a_{stab}(u^{N},v^{N})
\end{equation*}
and
\begin{align*}
a_{stab}(u^{N},v^{N})&=\sum_{K\subset\Omega}(-\varepsilon\Delta u^{N}+bu^{N}_{x}+cu^{N},\delta_{K}bv^{N}_{x})_{K}.
\end{align*}
Note that $\Delta u^N=0$ in $K$ for $u^N\vert_K\in P_1(K)$. Following usual practice
\cite{Roo1Sty2Tob3:2008-Robust},  the stabilization parameter $\delta_{K}=\delta(x,y)|_{K}$ are defined by
\begin{equation*}\label{eq:delta}
\delta(x,y)=
\left\{
\begin{array}{ll}
C^*N^{-1}&\text{if $(x,y)\in\Omega_s\cup\Omega_y$}\\
0&\text{if $(x,y)\in\Omega_x\cup\Omega_{xy}$}
\end{array}
\right.
\end{equation*}
where $C^*$ is a positive constant   independent of 
$\varepsilon$ and the mesh $\mathcal{T}_N$.  The choice of $\delta$ makes the following coercivity hold  
\begin{equation*}\label{eq:SD coercivity}
a_{SD}(v^{N},v^{N})\ge \frac{1}{2}\vvvert v^{N} \vvvert^{2} 
\quad \forall v^{N}\in V^{N}
\end{equation*}
where
\begin{equation*}\label{eq:SD norm}
\vvvert v^{N} \vvvert^{2}:
=\varepsilon \vert v^{N} \vert^{2}_{1}+\Vert v^{N} \Vert^{2}
+\sum_{K\subset\Omega} \delta_{K}\Vert 
bv^{N}_x\Vert^{2}_{K}.
\end{equation*}
Note that  existence and uniqueness of the solution to \eqref{eq:SDFEM} is guaranteed by this coercivity.   Also  Galerkin orthogonality holds, i.e.,
\begin{equation}\label{eq:orthogonality of SD}
a_{SD}(u-u^N,v^N)=0 \quad \forall v^N\in V^N.
\end{equation}

For analysis on Shishkin meshes, we need the following  anisotropic interpolation error bounds given in
\cite[Lemma 3.1]{Linb1Styn2:2001-SDFEM} and \cite[Theorem 1]{Apel1Dobr2:1992-Anisotropic}.
\begin{lemma}\label{lem: anisotropic interpolation-triangle}
Let $K\in\mathcal{T}_N$ and $p\in (1,\infty]$ and
suppose that $K$ is $K^1_{i,j}$ or $K^2_{i,j}$. Assume
that $w\in W^{2,p}(\Omega)$ and denote by $w^I$ the linear function that interpolates to $w$ at the vertices of
$K$.  Then
\begin{align*}
&\Vert w-w^I \Vert_{L^p(K)}\le
C\sum_{l+m=2}h^l_{x,i}h^m_{y,j}\Vert \partial^l_x\partial^m_y w \Vert_{L^p(K)},\\
&\Vert (w-w^I)_x \Vert_{L^p(K)}\le
C\sum_{l+m=1}h^l_{x,i}h^m_{y,j}\Vert \partial^{l+1}_x\partial^m_y w \Vert_{L^p(K)},\\
&\Vert (w-w^I)_y \Vert_{L^p(K)}\le
C\sum_{l+m=1}h^l_{x,i}h^m_{y,j}\Vert \partial^{l}_x\partial^{m+1}_y w \Vert_{L^p(K)}
\end{align*}
where $l$ and $m$ are nonnegative integers.
\end{lemma}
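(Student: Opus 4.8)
The plan is to prove all three bounds by the classical device of passing to a fixed reference triangle, establishing the estimates there, and scaling back; the only real work is to make the scaling \emph{anisotropic}, i.e.\ to ensure that each partial derivative is weighted by the matching mesh size. Since $K$ is a right triangle with legs parallel to the axes, I would introduce the affine map sending $K^1_{i,j}$ onto the reference triangle $\hat K$ with vertices $(0,0),(1,0),(0,1)$ through $\hat x=(x-x_i)/h_{x,i}$, $\hat y=(y-y_j)/h_{y,j}$ (the element $K^2_{i,j}$ is the mirror image of such a triangle and is handled by the same argument). Under this map a function $w$ becomes $\hat w(\hat x,\hat y)=w(x,y)$, the interpolation commutes with the transformation so that $\widehat{w^I}=(\hat w)^I$, and the norms scale as $\|w\|_{L^p(K)}=(h_{x,i}h_{y,j})^{1/p}\|\hat w\|_{L^p(\hat K)}$ together with $\partial_x^l\partial_y^m w=h_{x,i}^{-l}h_{y,j}^{-m}\,\partial_{\hat x}^l\partial_{\hat y}^m\hat w$. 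I note in passing that the restriction $p\in(1,\infty]$ is exactly what is needed for the embedding $W^{2,p}(\hat K)\hookrightarrow C(\overline{\hat K})$ in two dimensions, so that the vertex values defining $\hat w^I$ are meaningful.

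On the reference element I would prove three separate estimates. For the $L^p$ bound the functional $\hat w\mapsto\hat w-\hat w^I$ is bounded from $W^{2,p}(\hat K)$ into $L^p(\hat K)$ and vanishes on $P_1(\hat K)$, so the Bramble--Hilbert (Deny--Lions) lemma gives $\|\hat w-\hat w^I\|_{L^p(\hat K)}\le C\sum_{l+m=2}\|\partial_{\hat x}^l\partial_{\hat y}^m\hat w\|_{L^p(\hat K)}$. The derivative bounds are the delicate ones, because a direct application of Bramble--Hilbert to $\partial_{\hat x}(\hat w-\hat w^I)$ would return the \emph{full} second-order seminorm and hence, after scaling, an \emph{isotropic} estimate containing the spurious term $\partial_y^2 w$. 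To avoid this I would reduce the first-derivative estimate to a first-order argument: since $\hat w^I$ is affine, $\partial_{\hat x}\hat w^I$ is the constant $\hat w(1,0)-\hat w(0,0)=\int_0^1\partial_{\hat x}\hat w(t,0)\,dt$, i.e.\ the mean of $g:=\partial_{\hat x}\hat w$ along the bottom edge. Hence $\partial_{\hat x}(\hat w-\hat w^I)=g-\overline g_{\mathrm{edge}}$, and a first-order Poincar\'e/Bramble--Hilbert estimate (the map $g\mapsto g-\overline g_{\mathrm{edge}}$ is bounded on $W^{1,p}(\hat K)$ and annihilates constants) yields $\|g-\overline g_{\mathrm{edge}}\|_{L^p(\hat K)}\le C\|\nabla g\|_{L^p(\hat K)}=C\big(\|\partial_{\hat x}^2\hat w\|_{L^p(\hat K)}+\|\partial_{\hat x}\partial_{\hat y}\hat w\|_{L^p(\hat K)}\big)$. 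The $y$-derivative bound follows symmetrically, using the slope of $\hat w^I$ along the left edge and producing only $\partial_{\hat x}\partial_{\hat y}\hat w$ and $\partial_{\hat y}^2\hat w$.

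Finally I would scale each reference estimate back to $K$. Inserting the transformation rules, every term $\|\partial_{\hat x}^l\partial_{\hat y}^m\hat w\|_{L^p(\hat K)}$ picks up the factor $(h_{x,i}h_{y,j})^{-1/p}h_{x,i}^l h_{y,j}^m$, while the left-hand sides carry the Jacobian factor $(h_{x,i}h_{y,j})^{1/p}$ (and, for the derivative bounds, an extra $h_{x,i}^{-1}$ or $h_{y,j}^{-1}$ from the differentiation). These Jacobian powers cancel, leaving precisely the stated anisotropic bounds with the products $h_{x,i}^l h_{y,j}^m$. The main obstacle is entirely located in the second step, namely the clean separation of the ``correct'' second derivatives in the first-derivative estimates. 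It is not enough to quote the generic Bramble--Hilbert lemma, since its seminorm is isotropic; the essential observation is that the $\hat x$-slope of the affine interpolant equals the edge-average of $\partial_{\hat x}\hat w$, which lets one drop to a first-order estimate and thereby eliminate $\partial_y^2\hat w$. Everything else---the change of variables, the Jacobian bookkeeping, and the reflection handling $K^2_{i,j}$---is routine.
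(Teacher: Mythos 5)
The paper gives no proof of this lemma at all---it is quoted directly from Lin{\ss}--Stynes and Apel--Dobrowolski---and your argument is a correct reconstruction of the proof in the latter reference: anisotropic scaling to the reference triangle, Bramble--Hilbert for the $L^p$ bound, and the key identity that the $\hat x$-slope of the affine interpolant equals the edge average of $\partial_{\hat x}\hat w$, which reduces the gradient bounds to first-order estimates and removes the spurious isotropic terms. One small correction to a side remark: $W^{2,1}(\hat K)\hookrightarrow C(\overline{\hat K})$ already holds in two dimensions, so the restriction $p>1$ is not forced by that embedding but by the trace/boundedness requirements in your edge-average step (indeed the gradient estimates are known to fail for $p=1$ on triangles).
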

\section{The discrete Green's function}
The estimates of the discrete Green's function
   are similar to ones in \cite{Linb1Styn2:2001-SDFEM} and \cite{Zha1Mei2Che3:2013-Pointwise}. However,  here we present  more \emph{delicate} requirements for the parameters in the weight function, as will be helpful to obtain sharper pointwise bounds.

Let $\boldsymbol{x}^{\ast}=(x^*,y^*)$ be a mesh node in $\Omega$. The discrete Green's function $G\in V^{N}$ associated with $\boldsymbol{x}^{\ast}$ is defined by
\begin{equation}\label{eq:discrete Green function}
a_{SD}(v^{N},G)=v^{N}(\boldsymbol{x}^{\ast})\quad\forall v^{N}\in V^{N}.
\end{equation}
The bound of the discrete Green's function in the energy norm  relies on weight arguments. To start with,  we define a weight function 
\begin{equation*}
\omega(\boldsymbol{x}):=
g\left(\frac{x-x^{\ast}}{\sigma_{x}}\right)
g\left(\frac{y-y^{\ast}}{\sigma_{y}}\right)
g\left(-\frac{y-y^{\ast}}{\sigma_{y}}\right)
\end{equation*}
with $g(r)=2/(1+e^{r})$ for $r\in(-\infty,\infty)$ and
\begin{equation}\label{eq:sigma xy}
\begin{split}
&\sigma_x=k\max\{ N^{-1}, \;\varepsilon\ln^2 N\},\\
&\sigma_y=
\left\{
\begin{array}{ll}
k N^{-1/2} & \text{ if
 $\varepsilon\le N^{-2}$}\\
k \max\{ N^{-3/2} \varepsilon^{-1/2},\;\; \varepsilon^{1/2} \} &\text{ if $N^{-2}\le \varepsilon \le N^{-1}$}
\end{array}
\right.
.
\end{split}
\end{equation}
We shall choose $k>0$ later.

\begin{remark}
The definition \eqref{eq:sigma xy} is different from one in \cite{Zha1Mei2Che3:2013-Pointwise}. It is more delicate and is helpful for our  pointwise estimations. If  $N^{-2}\le \varepsilon \le N^{-1}$, $\sigma_y$  satisfies $k N^{-3/4}\le \sigma_y\le k N^{-1/2}$ and is smaller than one in \cite{Zha1Mei2Che3:2013-Pointwise}.
\end{remark}

\par
For the following analysis, we collect some basic properties of the weight function which can be obtained by some elementary calculations. 
\begin{lemma}\label{elementary properties}
Let $\sigma_x\ge N^{-1}$ and $\sigma_y\ge N^{-1}$. The following estimates hold true for the weight function $\omega(\boldsymbol{x})$:
\begin{enumerate}[(i)]
  \item
  $0<\omega<8$ on $\Omega$;
  \item
  $(\omega^{-1})_{x}>0$ on $\Omega$;
  \item
  for any $K\in\mathcal{T}_{N}$,
$$
\frac{  \underset{K}{\max}\;\omega^{-1}  }{ \underset{K}{\min}\;\omega^{-1} }\le C \;\text{ and } \;\; \frac{  \underset{K}{\max}\; (\omega^{-1})_{x} }{ \underset{K}{\min}\;(\omega^{-1})_{x} }    \le     C;
$$

  \item
  for all $l\ge0$ and $m\ge0$,
   \begin{equation*}
   \left|\frac{\partial^{l+m}\omega(\boldsymbol{x})}{\partial x^{l}\partial y^{m}}\right|
   \le
   C\sigma^{-l}_{x}\sigma^{-m}_{y}\omega(\boldsymbol{x})
   \quad\mbox{on}\quad\Omega;
   \end{equation*}
   \item
    for all $l\ge1$ and $m\ge0$,
    \begin{equation*}
    \left|\frac{\partial^{l+m}\omega(\boldsymbol{x})}{\partial x^{l}\partial y^{m}}\right|
    \le
     C\sigma^{1-l}_{x}\sigma^{-m}_{y}|\omega_{x}(\boldsymbol{x})|
     \quad\mbox{on}\quad\Omega;
    \end{equation*}
    \item
     on any triangle $K^{\ast}$ that contains $\boldsymbol{x}^{\ast}$,
    $ \omega(\boldsymbol{x})\ge C>0$.
\end{enumerate}
\end{lemma}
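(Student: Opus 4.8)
The plan is to exploit the tensor-product structure of the weight. Writing $\omega(\boldsymbol{x})=A(x)B(y)C(y)$ with $A(x)=g(r_x)$, $B(y)=g(r_y)$, $C(y)=g(-r_y)$, where $r_x=(x-x^{\ast})/\sigma_x$ and $r_y=(y-y^{\ast})/\sigma_y$, every assertion reduces to a few scalar facts about $g(r)=2/(1+e^{r})$. First I would record: (a) $0<g<2$ on $\mathbb{R}$, with $g(0)=1$; (b) the logistic identity $g'(r)=-\tfrac12\,g(r)\bigl(2-g(r)\bigr)$, so that $h:=1/g$ satisfies $h(r)=(1+e^{r})/2$ and $h'(r)=e^{r}/2>0$; (c) the multiplicative bound $e^{-|\delta|}\le g(r+\delta)/g(r)\le e^{|\delta|}$, read off from $g(r+\delta)/g(r)=(1+e^{r})/(1+e^{r}e^{\delta})$. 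From (b), every derivative is a polynomial in $g$: $g^{(l)}=R_l(g)$ with $R_0(g)=g$ and the recursion $R_l=R_{l-1}'\cdot P$, where $P(g):=-\tfrac12 g(2-g)$ satisfies $g'=P(g)$. Since $P$ carries a factor $g$, so does $R_l$ for $l\ge1$, whence $|g^{(l)}|\le C_l\,g$ for all $l\ge0$.

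With these in hand the routine items follow quickly. Property~(i) is immediate from $0<g<2$ applied to the three factors. Property~(ii) holds because $(\omega^{-1})_x=\sigma_x^{-1}h'(r_x)\,B^{-1}C^{-1}>0$, using $h'>0$ and positivity of the $y$-factors. For property~(iv) I would differentiate the product, $\partial_x^{l}\partial_y^{m}\omega=A^{(l)}\,\partial_y^{m}(BC)$, bound $|A^{(l)}|\le C_l\sigma_x^{-l}A$ by (b), and expand $\partial_y^{m}(BC)$ by Leibniz using $|B^{(j)}|\le C_j\sigma_y^{-j}B$ and the analogous bound for $C$ to get $|\partial_y^{m}(BC)|\le C\sigma_y^{-m}BC$; the product of these bounds is the claim. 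Property~(vi) uses (c) at the base point $r=0$: on a triangle $K^{\ast}\ni\boldsymbol{x}^{\ast}$ one has $\omega\ge e^{-|r_x|-2|r_y|}$ with $|r_x|\le h_{x,i}/\sigma_x$ and $|r_y|\le h_{y,j}/\sigma_y$, so it remains only to bound these quotients.

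Those quotients are exactly the quantitative core of property~(iii), so I would treat both together. Using the hypotheses $\sigma_x,\sigma_y\ge N^{-1}$, Assumption~\ref{assumption: varepsilon-N} ($\varepsilon\le N^{-1}$) and the mesh-size bounds, one checks $h_{x,i}/\sigma_x\le C$ and $h_{y,j}/\sigma_y\le C$ in every regime: on coarse cells $h\le 3N^{-1}\le 3\sigma$, while on fine cells $h_x\le C_2\varepsilon N^{-1}\ln N$ and $h_y\le C_2\sqrt{\varepsilon}N^{-1}\ln N$ give quotients bounded by $C_2\varepsilon\ln N$ and $C_2\sqrt{\varepsilon}\ln N$, both $\le C$ since $\varepsilon\le N^{-1}$. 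The two max/min ratios in~(iii) then follow from the factorized forms: for $\omega^{-1}$ apply~(c) to each factor, whose multiplicative oscillation over $K$ is $\le e^{C}$; for $(\omega^{-1})_x$ only the $x$-factor changes, with $h'(r_1)/h'(r_2)=e^{r_1-r_2}$ controlled by $e^{h_{x,i}/\sigma_x}\le e^{C}$, the $y$-factors being handled as before.

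The one genuinely delicate point is property~(v), and it is where I expect the main obstacle. Here the target is control by $|\omega_x|$, which scales like $|g'|$, rather than by $\omega$, which scales like $g$; but $g'(r)\to0$ as $r\to-\infty$ while $g(r)\to2$, so the naive estimate $g\le C|g'|$ is false and the bound $|g^{(l)}|\le C_l g$ from the easy items is useless. The resolution is to feed the recursion back in: since $R_l=R_{l-1}'\cdot P$ and $g'=P(g)$, the ratio $g^{(l)}/g'=R_l(g)/P(g)=R_{l-1}'(g)$ is again a \emph{polynomial} in $g$, hence bounded on $0<g<2$. This yields the sharp uniform bound $|g^{(l)}|\le C_l\,|g'|$ for every $l\ge1$, with no restriction on $r$. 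Granting it, (v) closes exactly like (iv): write $|A^{(l)}|\le C_l\sigma_x^{-l}|g'(r_x)|=C_l\sigma_x^{1-l}|A'|$, combine with $|\partial_y^{m}(BC)|\le C\sigma_y^{-m}BC$, and recognise $|A'|\,BC=|\omega_x|$.
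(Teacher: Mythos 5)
Your proof is correct. The paper states this lemma without proof (it is dismissed as "elementary calculations"), and your tensor-product reduction to scalar facts about $g(r)=2/(1+e^{r})$ supplies all six items cleanly — in particular you correctly identify and resolve the only non-routine point, namely that (v) cannot follow from $|g^{(l)}|\le C_l g$ but does follow from the observation that $g^{(l)}/g'=R_{l-1}'(g)$ is again a polynomial in $g$, hence bounded on $(0,2)$, giving the uniform bound $|g^{(l)}|\le C_l|g'|$ for $l\ge1$.
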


Now we define a weighted energy norm  
\begin{equation}\label{eq:weighted norm}
\begin{split}
 \vvvert G \vvvert^{2}_{\omega}:= &\varepsilon \Vert \omega^{-1/2}G_x \Vert^{2}
+\varepsilon\Vert \omega^{-1/2}G_y \Vert^{2}+\frac{b}{2}\Vert (\omega^{-1})^{1/2}_x G\Vert^{2}\\
&+c\Vert \omega^{-1/2}G \Vert^{2}
+\sum_K b^{2}\delta_K\Vert \omega^{-1/2}G_x \Vert^{2}_K.
\end{split}
\end{equation}
Note that $(\omega^{-1})_{x}>0$. For any subdomain $D$ of $\Omega$, let
$\vvvert G \vvvert_{\omega,D}$ mean that the integrations in \eqref{eq:weighted norm} are restricted to $D$.
The equalities \eqref{eq:SDFEM}, \eqref{eq:weighted norm} and integration by parts yield
\begin{equation*}\label{eq:norm and bilinear form}
\begin{split}
\vvvert G \vvvert^{2}_{\omega}
=&a_{SD}(\omega^{-1}G,G)-\varepsilon((\omega^{-1})_xG,G_x)
- \varepsilon ((\omega^{-1})_yG,G_y)\\
&
-\sum_K  ( b(\omega^{-1})_xG+c \omega^{-1}G,\delta_K \; bG_x)_K.
\end{split}
\end{equation*}
Considering \eqref{eq:discrete Green function} we  have
\begin{equation*}\label{eq:idea-weighted estimate}
\begin{split}
a_{SD}(\omega^{-1}G,G)&=a_{SD}(\omega^{-1}G-(\omega^{-1}G)^{I},G)+a_{SD}((\omega^{-1}G)^{I},G)\\
&=a_{SD}(\omega^{-1}G-(\omega^{-1}G)^{I},G)+(\omega^{-1}G)(\boldsymbol{x}^{\ast}).
\end{split}
\end{equation*}
With the above two equalities, the weighted energy estimate of $G$ will be obtained by means of  the next three Lemmas.
\begin{lemma}\label{lem:1}
If $\sigma_x\ge kN^{-1} $ and $\sigma_y\ge k \varepsilon^{1/2}$  for $k>1$ sufficiently
large and independent of $N$ and $\varepsilon$, we have
\begin{equation*}
a_{SD}(\omega^{-1}G,G)\ge \frac{1}{4}\vvvert G \vvvert^{2}_{\omega}.
\end{equation*}
\end{lemma}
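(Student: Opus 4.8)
The plan is to take as starting point the identity for $a_{SD}(\omega^{-1}G,G)$ obtained just above by integration by parts, and rewrite it as
\[
a_{SD}(\omega^{-1}G,G)=\vvvert G\vvvert^{2}_{\omega}+E_1+E_2+E_3,
\]
where $E_1:=\varepsilon((\omega^{-1})_xG,G_x)$, $E_2:=\varepsilon((\omega^{-1})_yG,G_y)$ and $E_3:=\sum_K(b(\omega^{-1})_xG+c\omega^{-1}G,\delta_K bG_x)_K$. Since the asserted inequality is equivalent to $E_1+E_2+E_3\ge-\tfrac34\vvvert G\vvvert^{2}_{\omega}$, it suffices to prove that each $|E_i|$ is bounded by a multiple of $\vvvert G\vvvert^{2}_{\omega}$ whose prefactor can be forced below any prescribed threshold by enlarging $k$.

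The first step is to record the pointwise derivative estimates for $\omega^{-1}$ that the argument rests on. Writing $\omega^{-1}=\tfrac18(1+e^{r_x})(1+e^{r_y})(1+e^{-r_y})$ with $r_x=(x-x^{\ast})/\sigma_x$ and $r_y=(y-y^{\ast})/\sigma_y$, a direct differentiation gives $0<(\omega^{-1})_x=\sigma_x^{-1}\tfrac{e^{r_x}}{1+e^{r_x}}\,\omega^{-1}\le\sigma_x^{-1}\omega^{-1}$ and $(\omega^{-1})_y=\sigma_y^{-1}\tanh(r_y/2)\,\omega^{-1}$, whence $|(\omega^{-1})_y|\le\sigma_y^{-1}\omega^{-1}$; these are the $\omega^{-1}$-counterparts of Lemma \ref{elementary properties}(iv), and they are essentially the only structural facts about the weight used below.

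Next I estimate $E_1$ and $E_3$ by the weighted Cauchy--Schwarz and Young inequalities, routing each contribution into the matching piece of $\vvvert G\vvvert^{2}_{\omega}$. For $E_1$ I split $(\omega^{-1})_xGG_x$ symmetrically: the $G^2$-part is absorbed into $\tfrac b2\|(\omega^{-1})^{1/2}_xG\|^2$ and, after $(\omega^{-1})_x\le\sigma_x^{-1}\omega^{-1}$, the $G_x^2$-part into $\varepsilon\|\omega^{-1/2}G_x\|^2$; balancing the Young parameter leaves a prefactor $\lesssim(\varepsilon/(b\sigma_x))^{1/2}\le(bk)^{-1/2}$, because Assumption \ref{assumption: varepsilon-N} gives $\varepsilon\le N^{-1}$ and thus $\sigma_x\ge kN^{-1}\ge k\varepsilon$. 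For $E_3$ I argue element by element, using $\delta_K\le C^{\ast}N^{-1}$: the $b(\omega^{-1})_xG$ piece lands in $\tfrac b2\|(\omega^{-1})^{1/2}_xG\|^2$ and the stabilization term $\sum_Kb^2\delta_K\|\omega^{-1/2}G_x\|^2_K$ with prefactor $\lesssim(C^{\ast}b/k)^{1/2}$ (again via $\sigma_x\ge kN^{-1}$), while the $c\omega^{-1}G$ piece lands in $c\|\omega^{-1/2}G\|^2$ and the stabilization term with a genuinely $O(N^{-1/2})$ prefactor supplied by the fixed (small) stabilization constant $C^{\ast}$, independently of $k$.

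The term $E_2$ is the crux and the step I expect to be the main obstacle, precisely because the weighted norm contains \emph{no} $y$-analogue $\|(\omega^{-1})^{1/2}_yG\|^2$ of the streamline term into which the $G^2$-part could be absorbed. Instead I use $|(\omega^{-1})_y|\le\sigma_y^{-1}\omega^{-1}$ and route the $G^2$-part into $c\|\omega^{-1/2}G\|^2$ and the $G_y^2$-part into $\varepsilon\|\omega^{-1/2}G_y\|^2$; balancing Young then yields the prefactor $(\varepsilon/(c\sigma_y^{2}))^{1/2}$, and it is exactly the hypothesis $\sigma_y\ge k\varepsilon^{1/2}$ that makes this $\le c^{-1/2}k^{-1}$. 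Collecting the three bounds gives $|E_1|+|E_2|+|E_3|\le C(b,c,C^{\ast})\,(k^{-1/2}+N^{-1/2})\,\vvvert G\vvvert^{2}_{\omega}$, so choosing $k$ large enough (and recalling $N\ge6$ together with the smallness of $C^{\ast}$) makes the right-hand side at most $\tfrac34\vvvert G\vvvert^{2}_{\omega}$, which is the claim.
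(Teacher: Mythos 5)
Your proposal is correct and follows essentially the same route as the argument the paper delegates to \cite[Lemma 4.2]{Linb1Styn2:2001-SDFEM}: start from the integration-by-parts identity relating $a_{SD}(\omega^{-1}G,G)$ to $\vvvert G\vvvert^2_{\omega}$, then absorb the three remainder terms via the pointwise bounds $0<(\omega^{-1})_x\le\sigma_x^{-1}\omega^{-1}$, $|(\omega^{-1})_y|\le\sigma_y^{-1}\omega^{-1}$ and weighted Young inequalities, with $\sigma_x\ge kN^{-1}\ge k\varepsilon$ and $\sigma_y\ge k\varepsilon^{1/2}$ supplying the small prefactors. Your one non-$k$ contribution, the $O((c\,\delta_K)^{1/2})$ prefactor from the $c\omega^{-1}G$ part of the stabilization term, is indeed controlled by the same smallness condition on $\delta$ that the paper already imposes for coercivity, so the argument closes.
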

\begin{proof}
See \cite[Lemma 4.2]{Linb1Styn2:2001-SDFEM}.
\end{proof}

\begin{lemma}\label{lem:2}
Assume $\sigma_x\ge kN^{-1}$ with $k>0$ independent of $N$ and $\varepsilon$. Then for each mesh point $\boldsymbol{x}^{\ast}\in\Omega_{s}\cup\Omega_{x}$, we have
\begin{equation*}
\left|(\omega^{-1}G)(\boldsymbol{x}^{\ast})\right|\le
\frac{1}{16}\vvvert G \vvvert^{2}_{\omega}
+
\left\{
\begin{matrix}
CN^2\sigma_x& \text{if $\b{x}^*\in\Omega_s$}\\
CN\ln N& \text{if $\b{x}^*\in\Omega_x$}
\end{matrix}
\right.
,
\end{equation*}
where $C$ is independent of $N$, $\varepsilon$ and $\boldsymbol{x}^{\ast}$.
\end{lemma}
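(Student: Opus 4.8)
The plan is to bound the pointwise quantity $(\omega^{-1}G)(\boldsymbol{x}^{\ast})$ by first peeling off the weight and then controlling $|G(\boldsymbol{x}^{\ast})|$ by a suitable piece of $\vvvert G\vvvert_{\omega}$, closing the estimate with Young's inequality $XY\le\frac{1}{16}Y^{2}+4X^{2}$ applied with $Y=\vvvert G\vvvert_{\omega}$. Since $\boldsymbol{x}^{\ast}$ lies in a triangle $K^{\ast}$ on which $\omega\ge C>0$ by Lemma~\ref{elementary properties}(vi), we have $\omega^{-1}(\boldsymbol{x}^{\ast})\le C$, so it suffices to prove $|G(\boldsymbol{x}^{\ast})|\le C\,\gamma\,\vvvert G\vvvert_{\omega}$ with $\gamma^{2}$ equal to the claimed data term. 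The two cases require genuinely different mechanisms, because a single inverse estimate on $K^{\ast}$ is sharp enough in $\Omega_{s}$ but loses a factor of $\varepsilon^{-1}$ in the outflow region $\Omega_{x}$.

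For $\boldsymbol{x}^{\ast}\in\Omega_{s}$, I would use that $G$ is linear on $K^{\ast}$, so the nodal value obeys the shape-independent inverse estimate $|G(\boldsymbol{x}^{\ast})|\le C|K^{\ast}|^{-1/2}\|G\|_{L^{2}(K^{\ast})}$, where $|K^{\ast}|\sim N^{-2}$ hence $|K^{\ast}|^{-1/2}\sim N$. The gain of the factor $\sigma_{x}$ comes from the third term of $\vvvert\cdot\vvvert_{\omega}$: differentiating the explicit weight gives $(\omega^{-1})_{x}=\omega^{-1}\sigma_{x}^{-1}e^{r}/(1+e^{r})$ with $r=(x-x^{\ast})/\sigma_{x}$, and since $|x-x^{\ast}|\le h_{x}\le C\sigma_{x}$ on $K^{\ast}$ (as $H_{x}\le 3N^{-1}$ while $\sigma_{x}\ge kN^{-1}$) together with $\omega^{-1}\ge\frac{1}{8}$, one has $(\omega^{-1})_{x}\ge c\,\sigma_{x}^{-1}$ on $K^{\ast}$. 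Restricting the nonnegative integrand of $\frac{b}{2}\|(\omega^{-1})_{x}^{1/2}G\|^{2}$ to $K^{\ast}$ then yields $\|G\|_{L^{2}(K^{\ast})}^{2}\le C\sigma_{x}\vvvert G\vvvert_{\omega}^{2}$, so $|G(\boldsymbol{x}^{\ast})|\le CN\sigma_{x}^{1/2}\vvvert G\vvvert_{\omega}$ and Young's inequality closes the case with data term $CN^{2}\sigma_{x}$.

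For $\boldsymbol{x}^{\ast}=(x^{\ast},y^{\ast})\in\Omega_{x}$, the localized estimate above would produce $|K^{\ast}|^{-1}\sim\varepsilon^{-1}N^{2}(\ln N)^{-1}$, which is too large; instead I would exploit that $G$ vanishes at the outflow boundary $x=1$. Writing $G(x^{\ast},y^{\ast})=-\int_{x^{\ast}}^{1}G_{x}(s,y^{\ast})\,ds$ along the horizontal mesh line $y=y^{\ast}$ and applying Cauchy--Schwarz gives $|G(\boldsymbol{x}^{\ast})|^{2}\le\lambda_{x}\int_{x^{\ast}}^{1}|G_{x}(s,y^{\ast})|^{2}\,ds$. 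Here $G_{x}(\cdot,y^{\ast})$ is piecewise constant and equals the elementwise value of $G_{x}$ on the adjacent row of triangles $R^{\ast}$, which I would choose on the coarse-$y$ side so that $h_{y}\sim N^{-1}$; the exact identity $\int_{e}|G_{x}|^{2}=\frac{|e|}{|K|}\int_{K}|G_{x}|^{2}=\frac{2}{h_{y}}\int_{K}|G_{x}|^{2}$ then converts the line integral into the area integral $\frac{2}{H_{y}}\|G_{x}\|_{L^{2}(R^{\ast})}^{2}$. Bounding this by the first term of $\vvvert\cdot\vvvert_{\omega}$ via $\omega^{-1}\ge\frac{1}{8}$ gives $\|G_{x}\|_{L^{2}(R^{\ast})}^{2}\le\frac{8}{\varepsilon}\vvvert G\vvvert_{\omega}^{2}$, whence $|G(\boldsymbol{x}^{\ast})|^{2}\le\frac{16\lambda_{x}}{H_{y}\varepsilon}\vvvert G\vvvert_{\omega}^{2}$; inserting $\lambda_{x}=\rho\varepsilon\beta^{-1}\ln N$ and $H_{y}\sim N^{-1}$ collapses the factor to $CN\ln N$, and Young's inequality finishes.

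I expect the $\Omega_{x}$ estimate to be the main obstacle: one must recognize that locality on $K^{\ast}$ should be abandoned in favour of integrating from the outflow boundary where $G=0$, and that the correct quadratic quantity to spend is the $\varepsilon$-weighted streamline-direction derivative $\varepsilon\|\omega^{-1/2}G_{x}\|^{2}$ rather than the mass or weighted-mass terms. The line-to-area conversion then succeeds precisely because the anisotropy $h_{y}\gg h_{x}$ of the outflow mesh furnishes the favourable factor $1/H_{y}$, so that the product $\lambda_{x}/(H_{y}\varepsilon)$ is only of order $N\ln N$.
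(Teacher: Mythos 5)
Your proof is correct. The paper itself gives no argument here --- it simply defers to \cite[Lemma 4.3]{Linb1Styn2:2001-SDFEM} --- so the comparison is with that reference. For $\boldsymbol{x}^{\ast}\in\Omega_{x}$ your mechanism (use $G=0$ on the outflow boundary to write $G(\boldsymbol{x}^{\ast})=-\int_{x^{\ast}}^{1}G_{x}(s,y^{\ast})\,ds$, convert the line integral into an area integral over the adjacent coarse-$y$ strip at cost $2/H_{y}$ using that $G_{x}$ is elementwise constant, and pay with $\varepsilon\Vert\omega^{-1/2}G_{x}\Vert^{2}$ so that $\lambda_{x}/(H_{y}\varepsilon)\le CN\ln N$) is exactly the standard one, and you correctly identify why a purely local estimate must be abandoned there. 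For $\boldsymbol{x}^{\ast}\in\Omega_{s}$ you replace the reference's integral-representation argument by a local inverse estimate $|G(\boldsymbol{x}^{\ast})|\le C|K^{\ast}|^{-1/2}\Vert G\Vert_{K^{\ast}}$ combined with the pointwise lower bound $(\omega^{-1})_{x}\ge c\,\sigma_{x}^{-1}$ on $K^{\ast}$ (valid because $|x-x^{\ast}|/\sigma_{x}\le 3/k$ there, $e^{r}/(1+e^{r})$ is then bounded below, and $g(s)g(-s)\le 1$ gives $\omega^{-1}\ge 1$ in the $y$-factors), spending the convective term $\tfrac{b}{2}\Vert(\omega^{-1})_{x}^{1/2}G\Vert^{2}$; this is a slightly more elementary variant that still yields the stated $CN^{2}\sigma_{x}$. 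All the individual steps check out: $\omega(\boldsymbol{x}^{\ast})=g(0)^{3}=1$, the $L^{\infty}$--$L^{2}$ inverse estimate for $P_{1}$ is shape-independent under affine mapping, the line-to-area identity $\int_{e}|G_{x}|^{2}=(|e|/|K|)\int_{K}|G_{x}|^{2}$ is exact, and Young's inequality $XY\le\tfrac{1}{16}Y^{2}+4X^{2}$ closes both cases with the required $\tfrac{1}{16}\vvvert G\vvvert_{\omega}^{2}$. The only point worth stating explicitly is the one you handle implicitly: for a node on an interface such as $x^{\ast}=1-\lambda_{x}$ or $y^{\ast}=\lambda_{y}$, the triangle $K^{\ast}$ and the strip $R^{\ast}$ must be chosen on the coarse side of the mesh.
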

\begin{proof}
See \cite[Lemma 4.3]{Linb1Styn2:2001-SDFEM}.
\end{proof}

\begin{lemma}\label{lem:3}
If $\sigma_{x}$ and $\sigma_{y}$ satisfy \eqref{eq:sigma xy}, where $k>1$ is sufficiently
large and independent of $N$ and $\varepsilon$, then
\begin{equation*}
a_{SD}((\omega^{-1}G)^{I}-\omega^{-1}G,G)\le \frac{1}{16}\vvvert G \vvvert^{2}_{\omega}.
\end{equation*}
\end{lemma}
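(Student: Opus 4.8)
The plan is to write $\zeta:=(\omega^{-1}G)^{I}-\omega^{-1}G$ for the interpolation error and to split $a_{SD}(\zeta,G)=a_{Gal}(\zeta,G)+a_{stab}(\zeta,G)$, estimating each resulting term by a weighted Cauchy--Schwarz inequality in which every factor of $\omega$ is distributed so that the $G$-factor is one of the quantities controlled by $\vvvert G\vvvert_{\omega}$ in \eqref{eq:weighted norm}. The Galerkin part contributes $\varepsilon(\zeta_{x},G_{x})$, $\varepsilon(\zeta_{y},G_{y})$, $(b\zeta_{x},G)$ and $(c\zeta,G)$, and the stabilization part contributes $\sum_{K}(-\varepsilon\Delta\zeta+b\zeta_{x}+c\zeta,\delta_{K}bG_{x})_{K}$. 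Since $(\omega^{-1}G)^{I}$ and $G$ are both linear on each $K$, we have $\Delta(\omega^{-1}G)^{I}=0$ and $\Delta G=0$, so $-\varepsilon\Delta\zeta=\varepsilon\Delta(\omega^{-1}G)$; this is the only term with no direct Galerkin analogue and will need separate care.

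First I would estimate the interpolation-error factors $\|\zeta\|_{L^{2}(K)}$, $\|\zeta_{x}\|_{L^{2}(K)}$ and $\|\zeta_{y}\|_{L^{2}(K)}$ by Lemma \ref{lem: anisotropic interpolation-triangle}, which replaces them by mesh-weighted second derivatives $h_{x,i}^{l}h_{y,j}^{m}\|\partial_{x}^{l}\partial_{y}^{m}(\omega^{-1}G)\|_{L^{2}(K)}$. Applying the Leibniz rule and using that $G$ is linear on $K$ (so all its derivatives of order $\ge 2$ vanish), each such derivative reduces to a finite sum of products of a derivative of $\omega^{-1}$ with one of $G$, $G_{x}$, $G_{y}$. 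The counterparts of parts (iv) and (v) of Lemma \ref{elementary properties} for $\omega^{-1}$ --- which follow from the same elementary computations and are the form needed here, since \eqref{eq:weighted norm} is weighted by $\omega^{-1}$ --- then turn every $\partial_{x}^{l}\partial_{y}^{m}(\omega^{-1})$ into $\sigma_{x}^{-l}\sigma_{y}^{-m}$ times $\omega^{-1}$, or, when $l\ge 1$, into $\sigma_{x}^{1-l}\sigma_{y}^{-m}$ times $(\omega^{-1})_{x}$, while parts (i)--(iii) let me bound the leftover weights and treat them as constant on each element. After this reduction every term is a product of a mesh-and-$\sigma$ coefficient with a weighted norm of $G$, $G_{x}$ or $G_{y}$ dominated by $\vvvert G\vvvert_{\omega}$.

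The coefficients that emerge are powers of the ratios $H_{x}/\sigma_{x}$, $h_{x}/\sigma_{x}$, $H_{y}/\sigma_{y}$, $h_{y}/\sigma_{y}$, together with the extra factors $\varepsilon$, $b$, $c$ and $\delta_{K}\le C^{*}N^{-1}$ appearing in the stabilization and in the $\varepsilon\Delta(\omega^{-1}G)$ term. The heart of the argument is to check that, under \eqref{eq:sigma xy} with $k$ large, all these ratios are small: since $\sigma_{x}\ge kN^{-1}\ge kH_{x}/3$ and $\sigma_{x}\ge k\varepsilon\ln^{2}N\gtrsim kh_{x}N\ln N$ one gets $H_{x}/\sigma_{x}$, $h_{x}/\sigma_{x}\lesssim 1/k$, and distinguishing the cases $\varepsilon\le N^{-2}$ and $N^{-2}\le\varepsilon\le N^{-1}$ gives $\sigma_{y}\ge kN^{-1}$ and $\sigma_{y}\ge k\sqrt{\varepsilon}$, hence $H_{y}/\sigma_{y}$, $h_{y}/\sigma_{y}\lesssim 1/k$. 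I expect the main obstacle to be precisely this bookkeeping: tracking each term through the four subdomains $\Omega_{s}$, $\Omega_{x}$, $\Omega_{y}$, $\Omega_{xy}$, where the coarse ($H$) and fine ($h$) mesh sizes enter in different combinations and the regime-dependent definition of $\sigma_{y}$ must be invoked, and in particular absorbing the anomalous term $\varepsilon\sum_{K}(\Delta(\omega^{-1}G),\delta_{K}bG_{x})_{K}$, whose $\sigma_{x}^{-2}$ and $\sigma_{y}^{-2}$ factors have to be controlled by $\varepsilon\delta_{K}$ and the norm contributions. Collecting everything, each coefficient is at most $C/k$, so taking $k$ large enough yields $a_{SD}(\zeta,G)\le\frac{1}{16}\vvvert G\vvvert^{2}_{\omega}$.
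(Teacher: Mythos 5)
Your skeleton matches the paper's: both expand $a_{SD}(\tilde E,G)$ with $\tilde E=(\omega^{-1}G)^{I}-\omega^{-1}G$, bound the second derivatives of $\omega^{-1}G$ elementwise via the Leibniz rule, the linearity of $G$ on each $K$ and the weight-function calculus of Lemma \ref{elementary properties}, feed these into the anisotropic interpolation estimates of Lemma \ref{lem: anisotropic interpolation-triangle}, and finish with a weighted Cauchy--Schwarz step against $\vvvert G\vvvert_{\omega}$. However, your central claim --- that the proof reduces to checking that the ratios $H_x/\sigma_x$, $h_x/\sigma_x$, $H_y/\sigma_y$, $h_y/\sigma_y$ are all $O(1/k)$, whence ``each coefficient is at most $C/k$'' --- is false on $\Omega_x$, and that is precisely where the hard part of the proof lives. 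On $\Omega_x$ the stabilization is switched off ($\delta=0$), so after the (necessary) integration by parts $(b\tilde E_x,G)=-(b\tilde E,G_x)$ the convective term can only be paired there with $\varepsilon^{1/2}\Vert\omega^{-1/2}G_x\Vert$, which forces you to control $\varepsilon^{-1/2}\Vert\omega^{1/2}\tilde E\Vert_{\Omega_x}$. The direct interpolation estimate on $\Omega_x$ (coarse $H_y\sim N^{-1}$ in the $y$-direction) gives only $\Vert\omega^{1/2}\tilde E\Vert_{\Omega_x}\le Ck^{-1}N^{-1/2}\vvvert G\vvvert_{\omega}$ as in \eqref{E Omegas Omegay}, and $\varepsilon^{-1/2}N^{-1/2}$ is unbounded as $\varepsilon\to0$; no choice of $k$ repairs this. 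The paper closes the gap with an idea absent from your plan: since $\tilde E$ vanishes on the outflow boundary $x=1$ and $\Omega_x$ is a strip of width $\lambda_x=O(\varepsilon\ln N)$, a one-dimensional Poincar\'e inequality in $x$ gives $\Vert\omega^{1/2}\tilde E\Vert^2_{\Omega_x}\le C\lambda_x^2\bigl(\sigma_x^{-2}\Vert\omega^{1/2}\tilde E\Vert^2_{\Omega_x}+\Vert\omega^{1/2}\tilde E_x\Vert^2_{\Omega_x}\bigr)$, which together with the refined gradient bound \eqref{eq:Ex-Omega-x} yields the needed $\Vert\omega^{1/2}\tilde E\Vert_{\Omega_x}\le Ck^{-1}\varepsilon^{1/2}\vvvert G\vvvert_{\omega}$ of \eqref{E Omegax}.

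A secondary gap: you list $(b\tilde E_x,G)$ among the terms to be estimated directly by weighted Cauchy--Schwarz, but the only $G$-components of \eqref{eq:weighted norm} without an $x$-derivative are $\Vert(\omega^{-1})_x^{1/2}G\Vert$ and $\Vert\omega^{-1/2}G\Vert$, and pairing against either of these requires $\Vert\omega^{1/2}\tilde E_x\Vert$ (or $\sigma_x^{1/2}\Vert\omega^{1/2}\tilde E_x\Vert$) to be $O(k^{-1})\vvvert G\vvvert_{\omega}$ with no help from the factor $(\varepsilon+b^2\delta)^{1/2}$; the available interpolation bounds only give this with that factor attached, as in \eqref{Ex}. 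The integration by parts that converts $(b\tilde E_x,G)$ into $-(b\tilde E,G_x)$, so that the convective term is absorbed by the streamline-diffusion and $\varepsilon|G|_1$ parts of the norm, is therefore not optional bookkeeping but a structural step of the argument.
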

\begin{proof}

For convenience we set
$\tilde{E}(\boldsymbol{x}):=((\omega^{-1}G)^{I}-\omega^{-1}G)(\boldsymbol{x})$.
Integration by parts yields $(b\tilde{E}_{x},G)=-(b\tilde{E},G_{x})$,  and Cauchy-Schwarz inequalities yield
\begin{equation}\label{B(E,G)}
\begin{split}
|a_{SD}(\tilde{E},G)|
\le
C\big( &\Vert (\varepsilon+b^{2}\delta)^{1/2}\omega^{1/2}\tilde{E}_{x} \Vert
+\varepsilon^{1/2}\Vert \omega^{1/2}\tilde{E}_{y} \Vert\\
&+\Vert (\varepsilon+b^{2}\delta)^{-1/2}\omega^{1/2}\tilde{E} \Vert
\big) \vvvert  G \vvvert_{\omega}.
\end{split}
\end{equation}

\noindent
\textbf{Step 1}. To analyze different kinds of interpolation bounds, we first estimate the derivatives of the weighted discrete Green's functions. Note that $G_{xx}=G_{yy}=G_{xy}=0$ on $K$ 
because of  $G|_K\in P_1(K)$. 
For convenience, we set $M_{K}:=\underset{K}{\max}\,\omega^{-1/2}$.

Using Lemma \ref{elementary properties} (iii)--(v), we obtain
\begin{equation}\label{eq:omega-G-xx}
\begin{split}
&\Vert (\omega^{-1}G)_{xx} \Vert_{K}
\le
\Vert(\omega^{-1})_{xx}G \Vert_{K}+2\Vert(\omega^{-1})_{x}G_{x}\Vert_{K}\\
\le&
CM_{K}
\left(  
\sigma^{-3/2}_{x}\Vert (\omega^{-1})^{1/2}_{x}G \Vert_{K}+
\sigma^{-1}_{x}\Vert \omega^{-1/2} G_{x} \Vert_{K}
\right)\\
\le&
CM_{K}
\left( 
\sigma^{-3/2}_{x}+
\sigma^{-1}_{x}(\varepsilon+b^{2}\delta)^{-1/2}
\right)
\vvvert G \vvvert_{\omega,K}.
\end{split}
\end{equation}

Note
$\Vert     G_{ y} \Vert_{K}\le C h^{-1}_{y,K} \Vert     G  \Vert_{K}$
or
$\Vert     G_{ y} \Vert_{K}\le C \varepsilon^{-1/2}\cdot \varepsilon^{1/2}\Vert     G_y  \Vert_{K}$,
then one has
$$
\Vert     G_{ y} \Vert_{K}\le C\min\{ h^{-1}_{y,K}, \varepsilon^{-1/2} \} 
\vvvert G \vvvert_{K},
$$
and
\begin{equation*}\label{eq:omega-y-G-y}
\begin{split}
&\Vert  (\omega^{-1})_{ y}G_{ y} \Vert_{K} 
\le 
C\underset{K}{\max} |(\omega^{-1})_y|\;   \Vert     G_{ y} \Vert_{K} \\
\le &
CM_{K}\sigma^{-1}_{ y}  \underset{K}{\max}\omega^{-1/2} \cdot 
\min\{ h^{-1}_{y,K}, \varepsilon^{-1/2} \} 
\vvvert G \vvvert_{K}\\
\le &
C M_{K} \sigma^{-1}_{ y} \min\{ h^{-1}_{y,K}, \varepsilon^{-1/2} \} 
\vvvert G \vvvert_{\omega,K},
\end{split}
\end{equation*}
where we have used (iii) in Lemma \ref{elementary properties}, for example
$$
\underset{K}{\max}\omega^{-1/2} \cdot \Vert    G \Vert_{K}
\le 
\frac{ \underset{K}{\max}\omega^{-1/2} }{ \underset{K}{\min}\omega^{-1/2} }
 \; \underset{K}{\min}\omega^{-1/2} \Vert    G \Vert_{K}
 \le 
 C \Vert   \omega^{-1/2} G \Vert_{K}.
$$
Similarly,  we have $\Vert(\omega^{-1})_{yy}G \Vert_{K}\le CM_{K} \sigma^{-2}_{y}\Vert \omega^{-1/2}G \Vert_{K}$
and 
\begin{equation}\label{eq:omega-G-yy}
\begin{split}
&\Vert (\omega^{-1}G)_{yy} \Vert_{K}
\le
\Vert(\omega^{-1})_{yy}G \Vert_{K}+\Vert(\omega^{-1})_{y}G_{y}\Vert_{K}\\
\le &
CM_{K}
\left( \sigma^{-2}_{y}+ \sigma^{-1}_{ y} \min\{ h^{-1}_{y,K}, \varepsilon^{-1/2} \}  \right)
\vvvert  G \vvvert_{\omega,K}.
\end{split}
\end{equation}

Recalling (iii) in Lemma \ref{elementary properties} and inverse estimates  \cite[Theorem 3.2.6]{Ciarlet:2002-finite} , we have
\begin{equation}\label{eq:omega-x-G-y-I}
\begin{split}
\Vert  (\omega^{-1})_{ x }G_{  y}  \Vert_{K}
&\le
C \underset{K}{\max} (\omega^{-1})_{ x }\cdot  \Vert G_{  y}  \Vert_{K}
\le
C \underset{K}{\max} (\omega^{-1})_{ x }\cdot h^{-1}_{y,K}   \Vert G  \Vert_{K} \\
&\le
C h^{-1}_{y,K}\left( \underset{K}{\max} (\omega^{-1})_{ x } \right)^{1/2}
\left( \underset{K}{\min} (\omega^{-1})_{ x } \right)^{1/2}
\cdot     \Vert G  \Vert_{K}\\
 &\le
CM_{K} \cdot h^{-1}_{y,K} \sigma^{-1/2}_{ x}\cdot     \Vert (\omega^{-1})^{1/2}_{ x } G  \Vert_{K}.
\end{split}
\end{equation}
Also, we have
\begin{equation}\label{eq:omega-x-G-y-II}
\Vert  (\omega^{-1})_{ x }G_{  y}  \Vert_{K}
\le
CM_{K}\cdot
 \varepsilon^{-1/2}\sigma^{-1}_{ x}\cdot  \varepsilon^{1/2}\Vert  \omega^{-1/2}G_{ y} \Vert_{K}.
\end{equation}
Then from \eqref{eq:omega-x-G-y-I} and \eqref{eq:omega-x-G-y-II}, one has
\begin{equation*}
\Vert  (\omega^{-1})_{ x }G_{  y}  \Vert_{K}
\le 
C M_{K} \min\{ h^{-1}_{y,K} \sigma^{-1/2}_{ x}  ,\varepsilon^{-1/2}\sigma^{-1}_{ x}\} \vvvert G \vvvert_{\omega,K},
\end{equation*}
and then
\begin{equation}\label{eq:omega-G-xy}
\begin{split}
&\Vert (\omega^{-1}G)_{xy} \Vert_{K}
\le
\Vert(\omega^{-1})_{xy}G \Vert_{K}+\Vert(\omega^{-1})_{y}G_{x}\Vert_{K}
+\Vert(\omega^{-1})_{x}G_{y}\Vert_{K}\\
\le &
CM_{K}
\left( \sigma^{-1/2}_{x}\sigma^{-1}_{y}\Vert (\omega^{-1})^{1/2}_{x}G \Vert_{K}+
\sigma^{-1}_{y}\Vert \omega^{-1/2} G_{x} \Vert_{K}
\right)+\Vert(\omega^{-1})_{x}G_{y}\Vert_{K}\\
\le &
CM_{K}
\big(  \sigma^{-1/2}_{x}\sigma^{-1}_{y}+
\sigma^{-1}_y(\varepsilon+b^{2}\delta)^{-1/2}\\
&+\min\{ h^{-1}_{y,K} \sigma^{-1/2}_{ x}  ,\varepsilon^{-1/2}\sigma^{-1}_{ x}\} \big)
\vvvert  G \vvvert_{\omega,K}.
\end{split}
\end{equation}

\noindent
\textbf{Step 2}.  Now, we will analyze $\nabla\tilde{E}$ and $\tilde{E}$ respectively. \\
\noindent
(a) From Lemma \ref{lem: anisotropic interpolation-triangle}, we obtain
\begin{align}
\Vert \tilde{E}_{x} \Vert_{K}
&\le
C\big(h_{x,K}\Vert(\omega^{-1}G)_{xx}\Vert_{K}+
h_{y,K}\Vert(\omega^{-1}G)_{xy}\Vert_{K}\big),\label{interpolation of Ex}\\
\Vert \tilde{E}_{y} \Vert_{K}
&\le
C
\big(h_{x,K}\Vert(\omega^{-1}G)_{xy}\Vert_{K}+
h_{y,K}\Vert(\omega^{-1}G)_{yy}\Vert_{K}\big).\label{interpolation of Ey}
\end{align}
Substituting \eqref{eq:omega-G-xx}, \eqref{eq:omega-G-yy} and \eqref{eq:omega-G-xy} into \eqref{interpolation of Ex} and \eqref{interpolation of Ey},  we have
\begin{align}
(\varepsilon+b^{2}\delta)\Vert \omega^{1/2}\tilde{E}_{x} \Vert^{2}
&\le
Ck^{-2}\vvvert G \vvvert^{2}_{\omega},\label{Ex}\\
\varepsilon\Vert\omega^{1/2} \tilde{E}_{y} \Vert^{2}
&\le
Ck^{-2}\vvvert  G \vvvert^{2}_{\omega}\label{Ey}.
\end{align}
More precisely, we have
\begin{equation}\label{eq:Ex-Omega-x}
\Vert \omega^{1/2}\tilde{E}_{x} \Vert_{ \Omega_x }
\le
Ck^{-1}( \varepsilon^{-1/2} \ln^{-1}N +\varepsilon^{-1/2}N^{-1}\sigma^{-1}_y)\vvvert  G \vvvert_{\omega},
\end{equation}
where we have used $ \sigma_x\ge  k\varepsilon\ln^2 N$.\\
(b) Lemma \ref{lem: anisotropic interpolation-triangle} yields
\begin{align*}
\Vert \tilde{E} \Vert_{K}
\le&
C\left(
h^{2}_{x,K}\Vert(\omega^{-1}G)_{xx}\Vert_{K}
+h_{x,K}h_{y,K}\Vert(\omega^{-1}G)_{xy}\Vert_{K}
+h^{2}_{y,K}\Vert(\omega^{-1}G)_{yy}\Vert_{K}
\right).
\end{align*}
Substituting \eqref{eq:omega-G-xx}, \eqref{eq:omega-G-yy} and \eqref{eq:omega-G-xy} into the above inequality, 
for $K\subset\Omega_s\cup\Omega_x\cup\Omega_y$ we have
\begin{equation}\label{E Omegas Omegay}
\Vert \omega^{1/2}\tilde{E}\Vert_{K}
\le
Ck^{-1}N^{-1/2}\vvvert  G \vvvert_{\omega,K},
\end{equation}
and for $K\subset\Omega_{xy}$ 
\begin{equation}\label{E Omega-xy}
\Vert \omega^{1/2}\tilde{E}\Vert_{K}
\le
Ck^{-1}\varepsilon^{1/2}\vvvert  G \vvvert_{\omega,K}.
\end{equation}

For what follows we need a sharper bound of $\Vert \omega^{1/2}\tilde{E} \Vert_{\Omega_x }$.    Similar to \cite[Lemma 4.4]{Linb1Styn2:2001-SDFEM}   we  consider \eqref{eq:sigma xy}, \eqref{E Omegas Omegay} and  \eqref{eq:Ex-Omega-x} and obtain 
\begin{equation}\label{E Omegax}
\begin{split}
&\Vert \omega^{1/2}\tilde{E} \Vert^{2}_{\Omega_x }
\le
C\lambda^{2}_{x}
\left\{\Vert(\omega^{1/2})_{x}\tilde{E}\Vert^{2}_{\Omega_x }+\Vert\omega^{1/2}\tilde{E}_{x}\Vert^{2}_{\Omega_x }\right\}\\
\le &
C\varepsilon^{2}\ln^{2}N\cdot
\left\{\sigma^{-2}_{x}\Vert \omega^{1/2}\tilde{E} \Vert^{2}_{\Omega_x }+\Vert\omega^{1/2}\tilde{E}_{x}\Vert^{2}_{\Omega_x }\right\}\\
\le  &
Ck^{-2}\varepsilon^{2}\ln^{2}N
    \{\sigma^{-2}_{x} N^{-1} +\varepsilon^{-1} \ln^{-2}N +\varepsilon^{-1 }N^{-2}\sigma^{-2}_y
    \}\vvvert G \vvvert^{2}_{\omega}\\
\le     &
    Ck^{-2}\varepsilon  \vvvert G \vvvert^{2}_{\omega},
    \end{split}
\end{equation}
where we have  used $N^{-1} \ln^4 N\le C $ for $N\ge 2$.
\par
Substituting \eqref{Ex}, \eqref{Ey} and \eqref{E Omegas Omegay}--\eqref{E Omegax} into \eqref{B(E,G)} and recalling the definition
of $\delta$, we obtain
\begin{equation*}
|a_{SD}(\tilde{E},G)|\le
Ck^{-1}\vvvert G \vvvert^{2}_{\omega}.
\end{equation*}
Choosing $k$ sufficiently large independently
of $\varepsilon$ and $N$, we are done.
\end{proof}

Lemmas 
\ref{lem:1}, \ref{lem:2} and \ref{lem:3}
 yield the following bound of the discrete Green function in the energy norm.
\begin{theorem}\label{theorem energy Green}
Assume  that $\sigma_{x}$ and $\sigma_{y}$ satisfy \eqref{eq:sigma xy},  where
$k$ is chosen so  that Lemmas \ref{lem:1}, \ref{lem:2} and \ref{lem:3} hold. 
For $\boldsymbol{x}^{*}\in \Omega_s\cup\Omega_{x}$ we have
\begin{equation*}
\vvvert G \vvvert^2 \le  8\vvvert G \vvvert^2_{\omega} \le 
\left\{
\begin{matrix}
CN^2 \sigma_x& \text{if $\b{x}^*\in\Omega_s$}\\
CN \ln  N& \text{if $\b{x}^*\in\Omega_x$}
\end{matrix}
\right..
\end{equation*}
\end {theorem}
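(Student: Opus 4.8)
The plan is to treat this theorem as a bookkeeping assembly of the three preceding lemmas, the genuine analytic work having already been discharged inside Lemmas \ref{lem:1}--\ref{lem:3}. First I would check that the single choice of $\sigma_x,\sigma_y$ in \eqref{eq:sigma xy} simultaneously satisfies the hypotheses of all three lemmas, so that one value of $k$ can be fixed. From \eqref{eq:sigma xy} we have $\sigma_x=k\max\{N^{-1},\varepsilon\ln^2 N\}\ge kN^{-1}$, which meets the requirements of Lemmas \ref{lem:1} and \ref{lem:2}; and in either branch of the definition $\sigma_y\ge k\varepsilon^{1/2}$ (in the branch $\varepsilon\le N^{-2}$ because then $kN^{-1/2}\ge k\varepsilon^{1/2}$ by $\varepsilon\le N^{-1}$, and in the branch $N^{-2}\le\varepsilon\le N^{-1}$ directly from the $\varepsilon^{1/2}$ term in the maximum), so the condition $\sigma_y\ge k\varepsilon^{1/2}$ of Lemma \ref{lem:1} holds. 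Lemma \ref{lem:3} is stated for \eqref{eq:sigma xy} verbatim. Thus a common $k>1$, taken large enough for each lemma, is admissible. Note also that the restriction $\boldsymbol{x}^{*}\in\Omega_s\cup\Omega_x$ is inherited from Lemma \ref{lem:2}.

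Next I would assemble the chain using the identity displayed just before Lemma \ref{lem:1}. Starting from Lemma \ref{lem:1}, $\tfrac14\vvvert G\vvvert^2_\omega\le a_{SD}(\omega^{-1}G,G)$, I substitute the splitting $a_{SD}(\omega^{-1}G,G)=a_{SD}(\omega^{-1}G-(\omega^{-1}G)^I,G)+(\omega^{-1}G)(\boldsymbol{x}^{*})$ and rewrite the first term as $-a_{SD}((\omega^{-1}G)^I-\omega^{-1}G,G)$. The one step needing care is the sign: Lemma \ref{lem:3} is phrased as an upper bound on $a_{SD}((\omega^{-1}G)^I-\omega^{-1}G,G)$, but its proof in fact bounds the absolute value $|a_{SD}(\tilde E,G)|$ through the Cauchy--Schwarz estimate at the start, so the term with the opposite sign is also controlled by $\tfrac1{16}\vvvert G\vvvert^2_\omega$. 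The nodal term $(\omega^{-1}G)(\boldsymbol{x}^{*})$ is absorbed by Lemma \ref{lem:2}, contributing a second $\tfrac1{16}\vvvert G\vvvert^2_\omega$ plus the data term $R$, where $R=CN^2\sigma_x$ on $\Omega_s$ and $R=CN\ln N$ on $\Omega_x$. Collecting gives $\tfrac14\vvvert G\vvvert^2_\omega\le\tfrac18\vvvert G\vvvert^2_\omega+R$; moving the weighted-norm term to the left yields $\tfrac18\vvvert G\vvvert^2_\omega\le R$, hence $\vvvert G\vvvert^2_\omega\le 8R$, which is the stated bound after renaming constants.

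Finally I would return to the plain energy norm via Lemma \ref{elementary properties}(i): since $0<\omega<8$ we have $\omega^{-1}>\tfrac18$, so $\int\phi^2\le 8\int\omega^{-1}\phi^2$ applied term by term shows $\varepsilon\|G_x\|^2\le 8\varepsilon\|\omega^{-1/2}G_x\|^2$ and likewise for the $G_y$, reaction and stabilization contributions, while the extra nonnegative term $\tfrac b2\|(\omega^{-1})^{1/2}_x G\|^2$ in \eqref{eq:weighted norm} only helps; this gives $\vvvert G\vvvert^2\le 8\vvvert G\vvvert^2_\omega$. Combining the two displayed inequalities completes the proof. I do not expect a real obstacle here: the substantive estimates live in Lemmas \ref{lem:1}--\ref{lem:3} (especially the interpolation analysis of Lemma \ref{lem:3}, which forces the delicate choice \eqref{eq:sigma xy}), and for the theorem itself the only points demanding attention are the simultaneous admissibility of the hypotheses under \eqref{eq:sigma xy} and the sign handling in the Lemma \ref{lem:3} step, after which the conclusion follows by absorption.
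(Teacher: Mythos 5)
Your proposal is correct and follows essentially the same route as the paper, which simply assembles Lemmas \ref{lem:1}--\ref{lem:3} via the two identities displayed before Lemma \ref{lem:1} and then uses $0<\omega<8$ from Lemma \ref{elementary properties}(i) to pass from $\vvvert G\vvvert_\omega$ to $\vvvert G\vvvert$ (the paper delegates the write-up to the cited reference, but the mechanism is exactly your absorption argument). Your attention to the sign in the Lemma \ref{lem:3} step and to the simultaneous admissibility of \eqref{eq:sigma xy} for all three lemmas is well placed; the only microscopic slip is that in the branch $\varepsilon\le N^{-2}$ the inequality $kN^{-1/2}\ge k\varepsilon^{1/2}$ follows from $\varepsilon^{1/2}\le N^{-1}\le N^{-1/2}$ rather than from $\varepsilon\le N^{-1}$.
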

\begin{proof}
See \cite[Theorem 4.1]{Linb1Styn2:2001-SDFEM}.
\end{proof}


%










\end{document}